\newtheorem{thm}{Theorem}
\newtheorem{cor}[thm]{Corollary}
\newtheorem{conj}[thm]{Conjecture}
\newtheorem*{clm*}{Claim}
\theoremstyle{definition}
\theoremstyle{remark}
\newtheorem{rem}[thm]{Remark}
\newcommand{\be}[1]{\begin{equation}\label{#1}}
\newcommand{\ee}{\end{equation}}
\numberwithin{equation}{section}
\newcommand{\ba}[1]{\begin{align}\label{#1}}
\newcommand{\ea}{\end{align}}
\numberwithin{equation}{section}
\newcommand{\ben}{\begin{equation*}}
\newcommand{\een}{\end{equation*}}
\numberwithin{equation}{section}
\newcommand{\calP}{\mathcal{P}}
\newcommand{\bbN}{\mathbb{N}}
\newcommand{\bbP}{\mathbb{P}}
\newcommand{\bbR}{\mathbb{R}}
\newcommand{\bbZ}{\mathbb{Z}}
\newcommand{\rk}[1]{\bgroup\color{red}%
  \par\medskip\hrule\smallskip%
  \noindent\textbf{#1}%
  \par\smallskip\hrule\medskip\egroup}
\newcommand{\IP}[1]{\langle #1 \rangle}
\newcommand{\conn}{\leftrightarrow}
\newcommand{\Z}{\bbZ}
\newcommand{\R}{\bbR}
\newcommand{\N}{\bbN}
\newcommand{\ie}{{\em i.e.}\ }
\newcommand{\eg}{{\em e.g.}\ }
\newcommand{\arXiv}[1]{arXiv preprint: #1}
\newcommand{\1}[1]{\mathbf{1}_{\{ #1 \} } }
\newcommand{\p}{\partial}
\renewcommand{\Pr}{\mathbb{P}}
\title[Indicable groups and $p_c<1$]{Indicable groups and $p_c<1$}
\author[Aran Raoufi]{Aran Raoufi}
\author[Ariel Yadin]{Ariel Yadin}
\begin{document}
\maketitle

\begin{abstract}
A conjecture of Benjamini \& Schramm from 1996
states that any finitely generated group that is not a finite extension of $\Z$ 
has a non-trivial percolation phase.
Our main results prove this conjecture for certain groups, and in particular 
prove that any group with a non-trivial homomorphism into the additive group of real numbers 
satisfies the conjecture.
We use this to reduce the conjecture to the case of hereditary just-infinite groups.

The novelty here is mainly in the methods used, combining the methods of EIT and evolving sets, and 
using the algebraic properties of the group to apply these methods.
\end{abstract}

\section{Introduction}

Bernoulli percolation on a graph is the process where each edge of the graph is deleted or kept 
independently.  This model has its origin in statistical physics \cite{perc}, 
but gives rise to interesting and beautiful mathematics
even in ``non-realistic'' geometries, such as Cayley graphs of abstract groups.
Especially interesting in these cases is the relation between the algebraic properties of the group and 
the behavior of the percolation process.  One example which we do not tackle in this paper is 
the relation between existence of a non-uniqueness infinite component phase and amenability 
(for this, see \eg \cite{LyonsPeres, Gabor} and references therein).
In this paper we are concerned with the property of the existence of a non-trivial percolation phase,
usually known as ``$p_c<1$''.

We now introduce our results rigorously.

\subsection{Percolation on groups}

Let $G$ be a finitely generated group. 
Let $S$ be a finite symmetric generating set for $G$.
Let $\Gamma = \Gamma(G,S)=(V(G,S),E(G,S))$ be a right Cayley graph for $G$. 
(That is, the graph whose vertices are elements of 
$G$ and edges are defined by $x \sim y$ if $x^{-1} y \in S$.)
Denote the unit element of $G$ by $1$. 
Let $\bbP_p$ the Bernoulli site percolation measure with parameter $p$.
(See \cite{BolRior, Grimmett, LyonsPeres} for background on percolation.)
Let %$x \conn y$ denote the event that $x,y$ are in the same connected component, and 
$x \conn \infty$ denote the event that $x$ is in an infinite component.
Let $p_c(\Gamma)$ be the critical point for percolation on $\Gamma$, i.e.,
$$p_c(\Gamma)= \inf \{ p \in [0,1] \ : \  \bbP_p[ 1 \leftrightarrow \infty] > 0 \}.$$

Since the property $p_c(\Gamma) < 1$ is invariant to quasi-isometries (see \eg Theorem 7.14 in \cite{LyonsPeres}), 
it does not depend on the specific choice of Cayley graph $\Gamma$.  
Thus, we may write $p_c(G) < 1$ without ambiguity. 
(This is in contrast to the fact that the specific value of $p_c(\Gamma)$ depends vey much on the
specific choice of Cayley graph, see \eg \cite[Chapter 3.3]{Grimmett}.)

\begin{conj}[Benjamini \& Schramm \cite{BS96}]
\label{conj:main}
For any finitely generated group $G$,
$p_c(G) = 1$ if and only if $G$ has a finite index cyclic subgroup.
\end{conj}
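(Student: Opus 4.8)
The plan is to prove the two implications separately, since one is elementary and the other carries all the content. Suppose first that $G$ has a finite-index cyclic subgroup, so that $G$ is either finite or virtually $\Z$. If $G$ is finite there is no infinite cluster at any $p$, so $p_c(G)=1$ trivially; if $G$ is virtually $\Z$ then $\Gamma$ is quasi-isometric to the usual Cayley graph of $\Z$, and since the property $p_c(\cdot)<1$ is a quasi-isometry invariant (as recalled above) it suffices to note $p_c(\Z)=1$. The latter is immediate: on the line any single closed site cuts the component of the origin down to a finite interval, so $\bbP_p[1\conn\infty]=0$ for every $p<1$.

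The reverse implication — that $p_c(G)<1$ whenever $G$ is \emph{not} virtually cyclic — is the substance of the conjecture, and I would attack it by first isolating the case flagged in the abstract: a group admitting a nontrivial homomorphism into $\bbR$. Because $G$ is finitely generated, such a homomorphism has image a nontrivial finitely generated subgroup of $\bbR$, hence a nonzero free abelian group; post-composing with a coordinate projection yields a surjection $\phi\colon G\to\Z$, so $G$ is \emph{indicable}. When moreover $G$ is not virtually cyclic the kernel $K=\ker\phi$ is infinite, and $\phi$ equips $\Gamma$ with a height function whose level sets are the $K$-cosets, each an infinite homogeneous space under the left action of $K$.

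For this indicable case the mechanism I would use is the Exponential Intersection Tails (EIT) criterion of Benjamini, Pemantle and Peres: if $\Gamma$ supports a probability measure on infinite rays from $1$ such that two independent rays meet in more than $n$ vertices with probability decaying exponentially in $n$, then $p_c(\Gamma)<1$. I would build such a measure from a random walk biased to drift upward in $\phi$, so that a typical trajectory climbs through the level sets and escapes to infinity. Two independent rays can intersect only at a common height and a common point of the corresponding $K$-coset; since $K$ is infinite the transverse positions diffuse in an unbounded space and the per-level collision probability is small. Making this quantitative and summable is precisely where evolving sets enter: they convert the isoperimetric profile of $G$ into the heat-kernel and overlap bounds needed for the exponential tail, while the transitivity of the $K$-action on each level set makes these estimates uniform in the height. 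This establishes $p_c(G)<1$ for every non-virtually-cyclic indicable group.

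With the indicable case settled, the remaining step is a reduction. Quasi-isometry invariance lets me replace $G$ by any finite-index subgroup, so if some finite-index $H\le G$ has positive first Betti number the previous paragraph gives $p_c(H)<1$, and since $H$ is quasi-isometric to $G$ also $p_c(G)<1$. I may therefore assume $G$ is not virtually cyclic and that no finite-index subgroup surjects onto $\Z$; after a standard group-theoretic reduction these hypotheses confine $G$ to the class of \emph{hereditary just-infinite} groups, to which the conjecture is thereby reduced. This last class is exactly where I expect the real difficulty: a hereditary just-infinite group has no virtual abelianization to exploit, so the $\phi$-directed construction is unavailable and there is no evident global coordinate along which to grow an infinite cluster. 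Proving $p_c<1$ there would require a different source of transverse spreading — presumably an isoperimetric or spectral input drawn directly from the just-infinite structure and fed back into the same EIT and evolving-sets framework — and this is the step I expect to remain the genuine obstacle.
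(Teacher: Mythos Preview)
This is Conjecture~\ref{conj:main}, which the paper does \emph{not} prove; it establishes the (virtually) indicable case and reduces the full conjecture to hereditary just-infinite groups (Theorem~\ref{thm:reduction}). Your proposal tracks this program and correctly concedes that final class as the obstacle, so at the strategic level you and the paper agree.

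Your sketch of the indicable case, however, has two real gaps that the paper fills explicitly. First, the kernel $K=\ker\phi$ need not be finitely generated; the paper disposes of exponential growth via Lyons and then invokes Rosset's theorem (Theorem~\ref{thm:Rosset}) to force $K$ finitely generated in the sub-exponential case --- without this there is no Cayley graph of $K$ on which to run evolving sets. Second, and more seriously, the EIT argument needs the transverse heat kernel on $K$ to be summable, which requires $K$ to have isoperimetric dimension strictly greater than $2$. When $K$ is virtually $\Z$ or virtually $\Z^2$ the evolving-sets bound gives decay only of order $t^{-1/2}$ or $t^{-1}$ and the exponential intersection tail fails outright. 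The paper treats these low-dimensional cases by entirely different arguments: for $K$ virtually $\Z^2$ one uses $p_c(\Z^2)<1$ on a subgraph; for $K$ virtually $\Z$ one passes to a finite-index subgroup where $K\cong\Z$ is central and then explicitly embeds $\Z\times\N$ into a Cayley graph of $G$. Your line ``the per-level collision probability is small because $K$ is infinite'' does not suffice, and note that the isoperimetric input to evolving sets is that of $K$ (on its conjugated Cayley graphs, Remark~\ref{rem:conjugate Cayley graphs}), not of $G$ as you wrote.

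Finally, the ``standard group-theoretic reduction'' you cite is in fact the content of Theorem~\ref{thm:reduction} and is not routine: it uses the existence of a just-infinite quotient, the branch/hereditary-just-infinite dichotomy, the observation that branch groups and nontrivial powers $H^d$ contain $\N^2$ (hence have $p_c<1$), and the monotonicity $p_c(G)\le p_c(G/N)$ from~\cite{BS96}. The last step --- when the just-infinite quotient turns out to be virtually $\Z$ --- loops back and invokes the indicable result for $G$ itself, so the two pieces are interlocked rather than sequential.
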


It is well known (see \cite{LyonsPeres}) that if $G$ has polynomial growth then the above conjecture is valid, 
for example by using Gromov's theorem regarding
groups of polynomial growth \cite{Gromov}, and the structure of nilpotent groups.  
Our results below give an alternative proof of this, 
which does not require the full theory of nilpotent groups 
(although there are other methods 
to prove this fact in the literature, see \cite{CT16}).
The conjecture is also known to hold for groups of exponential growth, due to Lyons \cite{Lyons95}.
Other works proving $p_c<1$ in the Cayley and non-Cayley graph setting include 
\cite{APS, BB99, CT16, Teix}.

Here is our main theorem.

\begin{thm} \label{thm:main1}
Let $G$ be a finitely generated group.
If there exists a finitely generated normal subgroup $N \lhd G$ 
with $\vert N \vert = \infty$ and $[G:N] = \infty$, then $p_c(G)<1$. 
%
%Moreover, if $G$ is not virtually $\bbZ^2$, then for any Cayley graph $\Gamma$ of $G$,
%for $p$ sufficiently close enough to $1$ all the infinite percolation clusters on $\Gamma$ are transient.
\end{thm}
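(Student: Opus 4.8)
The plan is to prove $p_c(G)<1$ by exhibiting, on the Cayley graph $\Gamma=\Gamma(G,S)$, a random infinite path emanating from the identity whose law has \emph{exponential intersection tails} (EIT): two independent samples of the path share only finitely many vertices, and the number of shared vertices has an exponentially decaying tail. By the exponential-intersection-tails criterion of Benjamini--Pemantle--Peres, the existence of such a measure for a parameter $\lambda<1$ forces $p_c(\Gamma)\le \lambda<1$; since $\Gamma$ is vertex-transitive this is exactly what we want. The whole problem is thereby reduced to a \emph{construction} of sufficiently spread-out random paths, and it is in this construction that the algebraic hypotheses on $N$ and $Q:=G/N$ enter.

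First I set up the geometry of the extension. Choosing $S=S_N\cup T$ with $S_N$ a finite symmetric generating set of $N$ and $T$ mapping onto a generating set of $Q$, the projection $\pi\colon G\to Q$ induces a graph map $\pi\colon\Gamma(G,S)\to\Gamma(Q,\pi(T))$ whose fibers are the cosets $gN$, each isometric to $\Gamma(N,S_N)$; generators in $S_N$ move within a fiber and generators in $T$ move between fibers, the gluing between adjacent fibers being (by normality of $N$) a conjugation automorphism of $N$. Since $Q$ is infinite it contains a ray $\bar q_0,\bar q_1,\dots$, which I lift to a sequence of fiber-crossing generators. The random path will be \emph{directed over the base}: it occupies the fiber over $\bar q_i$ during a single time interval and never returns to a previously visited fiber. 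This is the crucial structural point, because it guarantees that two independent copies can meet only while simultaneously sitting over the same base vertex $\bar q_i$, localizing all intersections to the fibers.

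The infinitude of $N$ is then used to make these fiber-meetings rare. Because $\Gamma(N,S_N)$ is infinite it contains a ray, giving ``room'' inside each fiber; between consecutive base-crossings I let the path move within the fiber according to a randomized, spread-out rule (a drifting, random-speed motion along this room rather than a purely diffusive one), so that two independent paths decorrelate in the fiber coordinate and, once separated, remain separated. This is what keeps the total intersection count finite almost surely even in the borderline cases where $N$ or $Q$ is merely virtually cyclic, so that $G$ is a $\Z^2$-like group: a diffusive fiber motion would meet infinitely often, whereas the directed, random-speed design forces eventual and permanent separation. To upgrade finiteness to an \emph{exponential} tail, the quantitative content of EIT, I bound the probability that two independent paths occupy the same vertex over $\bar q_i$. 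This is a collision (heat-kernel) estimate for the fiber dynamics, which I obtain from the isoperimetric profile of $\Gamma(N)$ via the evolving-set method of Morris--Peres; summing these bounds against the directed structure yields EIT$(\lambda)$ for some $\lambda<1$.

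The main obstacle is that the conjugation automorphisms gluing successive fibers can distort the fiber geometry drastically from one base-level to the next --- in solvable examples such as $\Z^2\rtimes\Z$ with a hyperbolic action the distortion is exponential --- so the fiber-spreading estimate cannot be read off from a single static copy of $\Gamma(N)$ but must be controlled uniformly along the base ray for the resulting time-inhomogeneous, twisted fiber dynamics. Making the evolving-set collision bounds robust to this distortion, and simultaneously calibrating the drift so that the path both escapes to infinity and crosses each fiber boundary only finitely often, is the technical heart of the argument; the remaining second-moment bookkeeping that turns the collision bounds into EIT is then routine.
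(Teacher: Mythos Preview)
Your outline matches the paper's argument in the \emph{generic} case: fix a self-avoiding ray in $Q=G/N$, randomize the fiber coordinate by multiplying by uniform generators of $N$, observe that two independent samples can only collide at equal base-times, and control those collisions by a time-inhomogeneous heat-kernel bound coming from evolving sets. One point you overcomplicate: the ``distortion'' from conjugation is a non-issue, because for $N\lhd G$ the conjugated generating sets $S_N^{u_t}$ all give \emph{isomorphic} Cayley graphs of $N$, hence identical isoperimetric profiles. The evolving-set bound is then uniform in $t$ for free; there is nothing to make ``robust''.

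The genuine gap is your treatment of the borderline cases $N$ virtually $\Z$ or virtually $\Z^2$. The evolving-set collision estimate needs isoperimetric dimension $d>2$ to make $\sum_t t^{-d/2}<1$ after thinning, and when $N$ has at most quadratic growth this fails. Your proposed fix, a ``drifting, random-speed'' fiber motion, is not a construction: for $N\cong\Z$ a conjugation $n\mapsto n^{-1}$ flips any chosen drift direction from one fiber to the next, so a well-defined drift only exists after passing to the index-$\le 2$ subgroup on which $N$ is central --- a reduction you do not make. The paper abandons EIT entirely in these two cases: for $N$ virtually $\Z^2$ it simply notes that some Cayley graph of $G$ contains a Cayley graph of $N$ as a subgraph and invokes $p_c(\Z^2)<1$; for $N$ virtually $\Z$ it reduces to $N\cong\Z$ central and then exhibits an explicit embedding of $\Z\times\N$ into a Cayley graph of $G$ via $(i,j)\mapsto n^i u_j$. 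Your proposal needs either to supply an explicit EIT measure that works when $\dim N\le 2$ (which would be new), or to carve out these cases and handle them by direct embeddings as the paper does.
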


\subsection{Virtual characters}

A group property $\calP$ is a family of groups closed under isomorphism.
Examples of group property include Abelian groups, nilpotent groups, exponential growth groups.
We say a group $G$ is {\bf virtually} $\calP$, if $G$ has a subgroup of finite index that is in $\calP$.

By a {\bf character} of a group $G$ we refer to a non-trivial 
homomorphism from $G$ to $(\R,+)$ (the additive group of real numbers).
By a {\bf virtual character} of $G$ we mean a character of a finite index subgroup of $G$.
(A group admitting a character is sometimes called {\bf indicable}. 
$G$ admits a virtual character if and only if it is virtually indicable).

The above theorem implies the following corollary.
\begin{cor} \label{cor:car}
If $G$ admits a virtual character, then $p_c(G)<1$ unless $G$ contains a finite index infinite cyclic subgroup.
\end{cor}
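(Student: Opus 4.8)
The plan is to pass to a finite-index subgroup, turn the virtual character into an honest surjection onto $\Z$, and feed its kernel into Theorem~\ref{thm:main1} whenever possible. First I would invoke quasi-isometry invariance of the property $p_c<1$: since $G$ admits a virtual character there is a finite-index subgroup $H\le G$ carrying a character $\phi\colon H\to(\R,+)$, and it suffices to decide $p_c(H)<1$. As $H$ is finitely generated, $\phi(H)$ is a finitely generated subgroup of $\R$, hence $\phi(H)\cong\Z^r$ for some $r\ge 1$; composing $\phi$ with a coordinate projection yields a surjection $\psi\colon H\twoheadrightarrow\Z$. Put $K=\ker\psi\lhd H$, so that $H/K\cong\Z$, whence $[H:K]=\infty$, and, $\Z$ being free, the extension splits as $H=K\rtimes\langle t\rangle$ with $\psi(t)=1$.

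The argument then rests on a trichotomy for $K$. If $K$ is finite, then $H=K\langle t\rangle$ with $\langle t\rangle$ infinite cyclic and $[H:\langle t\rangle]=[K:K\cap\langle t\rangle]\le|K|<\infty$, so $H$, and hence $G$, contains a finite-index infinite cyclic subgroup; this is precisely the excluded case. If $K$ is infinite and finitely generated, then $N=K$ is a finitely generated normal subgroup of $H$ with $|N|=\infty$ and $[H:N]=\infty$, so Theorem~\ref{thm:main1} applied to $H$ gives $p_c(H)<1$ and therefore $p_c(G)<1$.

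The main obstacle is the remaining case, where $K$ is infinite but \emph{not} finitely generated. Here $K$ cannot be fed into Theorem~\ref{thm:main1}, and in general there is no substitute: in $H=F_2$, for example (which is indicable and not virtually cyclic), every nontrivial normal subgroup of infinite index is infinitely generated, so the theorem is genuinely inapplicable. The plan is instead to show that a non-finitely-generated kernel forces exponential growth and then to invoke Lyons' theorem \cite{Lyons95}. Concretely, by the structure theory of Bieri and Strebel a finitely generated group surjecting onto $\Z$ splits as an HNN extension $H=\langle B,t\mid tB_1t^{-1}=B_2\rangle$ with $B$ finitely generated and $B_1,B_2\le B$, and $K$ is finitely generated exactly when $B_1=B_2=B$. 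Thus a non-finitely-generated kernel means at least one inclusion is proper, so the HNN extension is either non-ascending—whence it contains a nonabelian free subgroup by a ping-pong argument—or strictly ascending; in either case $H$ has exponential growth, and $p_c(H)<1$ by \cite{Lyons95}, giving $p_c(G)<1$. I expect the delicate points to be arranging the structural splitting with finitely generated edge groups and verifying the exponential-growth claim in the strictly ascending case; this is the part of the proof that does real work beyond Theorem~\ref{thm:main1}.
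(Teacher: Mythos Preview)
Your overall plan matches the paper's: pass to a finite-index subgroup with an honest surjection onto $\Z$, look at the kernel $K$, apply Theorem~\ref{thm:main1} when $K$ is infinite and finitely generated, and observe that $K$ finite forces virtually-$\Z$. The paper's proof diverges from yours only in how it disposes of the possibility that $K$ is not finitely generated. Rather than reaching for Bieri--Strebel and an HNN analysis, the paper handles the exponential-growth case \emph{first} (via Lyons~\cite{Lyons95}), and then, having reduced to subexponential growth, simply invokes Rosset's theorem (Theorem~\ref{thm:Rosset}): a subexponential-growth group with solvable quotient has finitely generated kernel. So in the paper the third branch of your trichotomy never arises.

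What you are doing in that third branch is, in effect, reproving the $G/N\cong\Z$ case of Rosset's theorem by hand. Your worry about the strictly ascending case is exactly the content of Rosset's argument: if $tBt^{-1}=B_2\subsetneq B$, pick $c\in B\setminus B_2$ and check that $t$ and $ct$ generate a free subsemigroup (the point being that $t^kct^{-k}\in t^kBt^{-k}\setminus t^{k+1}Bt^{-(k+1)}$ for every $k\ge 0$), which gives exponential growth. So your route is correct but longer; citing Rosset, as the paper does, replaces the HNN machinery with a one-line reference and avoids the case split you flagged as delicate.
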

Furthermore, by using the same methods, in the case that the group has a virtual character, we can show that if the group is transient itself, for $p$ sufficiently close to 1 the infinite cluster is transient. 

To make this statement precise, 
let us define for a graph $\Gamma$, 
$$ p_t(\Gamma) := \inf \{ p \in [0,1] \ : \ %\bbP_p [ 
\infty \textrm{ clusters are transient $\bbP_p-$a.s. } % ]=1 
\} . $$
Using indistinguishability \cite{indist}, this quantity is well defined; \ie infinite clusters are a.s.\ either all transient or all recurrent.
Of course $p_t(\Gamma) \geq p_c(\Gamma)$. 
Since transience is a property which is stable under quasi-isometries (see \eg Theorem 2.17 in \cite{LyonsPeres}), 
it follows that if $p_t(\Gamma) < 1$ for some Cayley graph $\Gamma$ of a finitely generated group $G$ 
then $p_t(\Gamma') < 1$ for any Cayley graph $\Gamma'$ of $G$.
Thus, as with $p_c$, we may write $p_t(G) < 1$ without ambiguity.

\begin{thm}\label{thm:p_t}
If $G$ admits a virtual character then $p_t(G)<1$ unless $G$ is virtually $\Z$ or virtually $\Z^2$.
\end{thm}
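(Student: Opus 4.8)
The plan is to prove both implications: that the exceptional groups really do have $p_t=1$, and that every other group carrying a virtual character satisfies $p_t<1$. I first dispose of the easy direction. If $G$ is virtually $\Z$ or virtually $\Z^2$, then its Cayley graph is quasi-isometric to $\Z$ or $\Z^2$ and hence recurrent, since recurrence is a quasi-isometry invariant. For every $p$, any infinite open cluster is a subgraph of the Cayley graph, so by Rayleigh monotonicity it is recurrent as well; thus $p_t(G)=1$, and the two exclusions are genuinely necessary. For the converse, assume $G$ admits a virtual character and is neither virtually $\Z$ nor virtually $\Z^2$. By the same quasi-isometry invariance, and because the classes ``virtually $\Z$'' and ``virtually $\Z^2$'' are commensurability invariant, I may pass to a finite-index subgroup and assume that $G$ itself carries a character $\phi\colon G\to(\R,+)$. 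Its image is a finitely generated, hence free abelian, subgroup of $\R$; composing $\phi$ with a coordinate projection yields a surjection $\psi\colon G\twoheadrightarrow\Z$ with kernel $N=\ker\psi$, exhibiting $G$ as an extension $1\to N\to G\to\Z\to 1$. By Varopoulos's characterisation of recurrent groups, the hypothesis that $G$ is not virtually $\Z$ or $\Z^2$ is precisely the statement that $G$ is transient.

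By Corollary~\ref{cor:car} we already know $p_c(G)<1$, so for $p$ close to $1$ there is almost surely an infinite open cluster; the remaining task is to show it is transient. To this end I would use the criterion of Benjamini, Pemantle and Peres: a network supports a transient random walk as soon as it carries a random path from the root to infinity with exponential intersection tails (EIT), \ie two independent copies $\Upsilon,\Upsilon'$ satisfy $\Pr[\,|\Upsilon\cap\Upsilon'|\ge k\,]\le C\rho^{k}$ for some $\rho<1$. The goal is thus to construct, inside the infinite open cluster, such an EIT family of paths. The height function $\psi$ supplies the direction to infinity: I would build a random path whose $\psi$-coordinate is strictly increasing, so that it necessarily escapes to infinity through the slabs $\psi^{-1}(k)$, while within each slab it performs a spreading, random-walk-like motion restricted to open sites. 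Since two independent copies of such a path can meet only at a common height, the size of their intersection is controlled by the per-slab collision probabilities, and the strict monotonicity in height lets meetings at distinct heights behave like a renewal sequence; EIT then reduces to a summable, geometric decay of these collision probabilities in $k$.

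The heart of the argument, and the step I expect to be the main obstacle, is controlling these collision probabilities inside the random cluster rather than in $G$ itself. For the full group, transience already forces the walk to spread enough that collision probabilities are summable; the difficulty is to show that this spreading survives the percolation constraint \emph{uniformly across all heights} $k$. Here I would bring in the method of evolving sets: for $p$ sufficiently close to $1$ the open cluster inherits an isoperimetric, or conductance, profile comparable to that of $G$, and evolving sets convert such a profile into the $\ell^2$-decay of the walk's law within each slab that is needed to bound collisions. Combining the layered path construction with these evolving-set estimates should produce the exponential intersection tails, hence transience of the cluster, for all $p$ above some $p_0<1$, giving $p_t(G)\le p_0<1$. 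The delicate point is precisely this uniform-in-$k$ transfer of expansion from $G$ to the open cluster, which is exactly where the two techniques highlighted in the introduction, EIT and evolving sets, must be made to work together.
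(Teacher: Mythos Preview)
Your proposal contains a genuine misunderstanding of where the hard work lies, and as a result you set yourself a much harder problem than the one that needs solving.

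The Benjamini--Pemantle--Peres EIT theorem (Theorem~\ref{thm:EIT} here) is not merely the statement ``a graph with an EIT measure is transient''; it is the statement that if the \emph{full} graph $\Gamma$ carries an EIT measure then $p_t(\Gamma)<1$. In other words, the passage from ``EIT on $\Gamma$'' to ``transience of infinite clusters for $p$ close to $1$'' is already packaged inside that black box. There is no need to build random paths inside the open cluster, to transfer isoperimetry from $G$ to the cluster, or to run evolving sets on the random environment. The ``delicate point'' you flag as the main obstacle simply does not arise: once an EIT measure on some Cayley graph of $G$ is produced, Theorem~\ref{thm:EIT} finishes the proof.

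Consequently, the real content you are missing is the case analysis that lets one actually produce such a measure on the full graph. The paper proceeds as follows. First split on growth: if $G$ has exponential growth, Lyons' tree argument already yields an EIT measure. If $G$ has subexponential growth, Rosset's theorem (Theorem~\ref{thm:Rosset}) is needed to conclude that $N=\ker\psi$ is \emph{finitely generated}; you never invoke this, and without it the evolving-sets machinery cannot even be set up, since it requires Cayley graphs of $N$. One then splits on $N$: if $N$ is infinite and not virtually $\Z$ or $\Z^2$, the construction in the proof of Theorem~\ref{thm:main1} (random walk on time-varying conjugated Cayley graphs of $N$, heat-kernel bound via evolving sets on $N$, not on any cluster) gives an EIT measure on $G$. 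If $N$ is finite, $G$ is virtually $\Z$. Finally, if $N$ is virtually $\Z$ or $\Z^2$ but $G$ is not, the EIT construction fails and a separate argument is required: $G\cong N\rtimes\Z$ is solvable of subexponential growth, hence virtually nilpotent by Milnor--Wolf, hence of polynomial growth, and then one quotes the polynomial-growth result of Benjamini--Schramm (see \cite{wedges}) that such $G$ have $p_t(G)<1$ unless $G$ is virtually $\Z$ or $\Z^2$. Your outline has no provision for this last case and no use of Rosset's theorem, so even after correcting the misreading of the EIT criterion it would not go through.
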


\subsection{A reduction}

Note that all groups of polynomial growth admit virtual characters (see \eg \cite{Kleiner}; 
this is in fact the standard main step toward proving Gromov's Theorem for polynomial growth groups).
But there are many other groups that admit virtual characters.
Indeed, any group with infinite Abelianization.

The Grigorchuk group is an example of a torsion group of intermediate growth.
Being torsion, it cannot admit a virtual character.  
However, Theorem \ref{thm:main1} still applies to the Grigorchuk group, and many other groups of intermediate growth.
In fact, most examples of intermediate growth groups known are so called {\em branch groups} (see \cite{branch groups}), 
for which it is quite simple to prove $p_c<1$:  If G is a branch group,
for any positive integer $d$, there exists a group $H$, such that G contains $H^d$ as a finite index subgroup.
%These groups always contain subgroups of finite index 
%which are direct products $H^d$ for arbitrary $d$; 
Thus branch groups have a 
Cayley graph containing $\N^d$ as a subgraph.  See below for some more details.
%There are also examples of other groups of intermediate growth that do admit virtual characters.

In fact we can use the above results to reduce Conjecture \ref{conj:main} to a specific family of groups.
Albeit, these groups are exactly those which there is a lack of examples, so they are poorly understood in a sense.
To state the reduction, we introduce some notation.

A group $G$ is {\bf just-infinite} if any non-trivial quotient of $G$ is finite; that is, any 
non-trivial normal subgroup of $G$ is of finite index.
A standard example of a just-infinite group is $\Z$.
However, this property is not hereditary; that is, one can have a just-infinite group that has 
a finite index subgroup that is not just-infinite.  
%In fact, consider $S_d$ the group
%of permutation of $d$ elements acting on $\Z^d$ by permuting coordinates.
%The group $S_d \ltimes \Z^d$ is just-infinite, but contains $\Z^d$ as a finite index subgroup.
A {\bf hereditary just-infinite} group is a group for which every finite index subgroup is just-infinite.
An example of such group is an infinite simple group.
(Recently infinite finitely generated simple groups of intermediate growth 
have been shown to exist in \cite{N16}.)
It is known that the only 
elementary amenable just-infinite groups are $\Z$ or the infinite dihedral group.
Specifically, these have an infinite cyclic group of finite index.  
See \cite{Grig14} for the proof.

Our reduction of Conjecture \ref{conj:main} is:
\begin{thm}
\label{thm:reduction}
If Conjecture \ref{conj:main} holds for the class of 
(finitely generated, sub-exponential growth, non elementary amenable)
hereditary just-infinite groups then the conjecture holds for all finitely generated groups.
\end{thm}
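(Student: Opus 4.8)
The plan is to argue by contradiction, producing from any hypothetical counterexample a counterexample lying in the special class, which the hypothesis forbids. The ``only if'' direction of Conjecture~\ref{conj:main} (a virtually cyclic group is quasi-isometric to $\Z$, so has $p_c=1$) is classical and unconditional, so it suffices to treat the ``if'' direction: granting the conjecture for the stated hereditary just-infinite groups, I would show that every finitely generated $G$ that is \emph{not} virtually cyclic satisfies $p_c(G)<1$. Suppose not, and let $G$ be such a group with $p_c(G)=1$. Two of the quoted results trim the possibilities at once. First, by Lyons' theorem $G$ cannot have exponential growth, so $G$ has sub-exponential growth (hence is amenable). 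Second, by Corollary~\ref{cor:car}, since $G$ is not virtually $\Z$, the group $G$ admits no virtual character; that is, $G$ is not virtually indicable.

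Next I would pass to a just-infinite quotient. Since $G$ is finitely generated and infinite, a Zorn's lemma argument (using that finite-index subgroups of finitely generated groups are finitely generated, so that the union of a chain of infinite-index normal subgroups again has infinite index) yields a normal subgroup $M\lhd G$ maximal subject to $[G:M]=\infty$; then $Q:=G/M$ is just-infinite. It is again finitely generated and of sub-exponential growth, and it is not virtually cyclic: the only virtually cyclic just-infinite groups are $\Z$ and the infinite dihedral group, and either would supply a character on a finite-index subgroup of $G$ (pulled back along $G\twoheadrightarrow Q$), contradicting the previous paragraph. The remaining point---and this is where I expect the real difficulty---is to see that $p_c(Q)=1$ still. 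Theorem~\ref{thm:main1} does not by itself suffice, because $M$ and other infinite-index normal subgroups need not be finitely generated, so they cannot be killed directly by Theorem~\ref{thm:main1}. The clean way around this is monotonicity of the threshold under the covering map $\Gamma(G,S)\to\Gamma(Q,\bar S)$: a quotient map of Cayley graphs does not decrease $p_c$, so $p_c(Q)\ge p_c(G)=1$. Establishing (or invoking the known) covering monotonicity is the crux of the reduction; granting it, $Q$ is a genuinely just-infinite counterexample.

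With $Q$ just-infinite I would invoke Wilson's trichotomy for just-infinite groups. If $Q$ is a \emph{branch} group, then a rigid level stabilizer $\mathrm{rist}_Q(n)$ has finite index in $Q$ (hence is finitely generated) and decomposes as a direct product of at least two infinite finitely generated groups; any one factor is an infinite, infinite-index, finitely generated normal subgroup of $\mathrm{rist}_Q(n)$, so Theorem~\ref{thm:main1} gives $p_c(\mathrm{rist}_Q(n))<1$ and therefore $p_c(Q)<1$, a contradiction. If instead $Q$ is virtually a direct power $H^n$ of a hereditarily just-infinite group $H$ with $n\ge 2$, then $H\times\{1\}^{\,n-1}$ is an infinite, finitely generated, infinite-index normal subgroup of $H^n$, and Theorem~\ref{thm:main1} again yields $p_c(Q)<1$, a contradiction.

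This leaves the case $n=1$: $Q$ is virtually $H$ with $H$ hereditarily just-infinite. Then $H$ is finitely generated (finite index in $Q$), of sub-exponential growth, and not virtually cyclic (being commensurable with $Q$); moreover $H$ is not elementary amenable, since an elementary amenable just-infinite group is $\Z$ or the infinite dihedral group, which are virtually cyclic. Hence $H$ lies in the special class, and the standing hypothesis applies: the conjecture holds for $H$, so---$H$ not being virtually cyclic---$p_c(H)<1$, whence $p_c(Q)<1$ by quasi-isometry invariance. This final contradiction shows no counterexample $G$ can exist. As flagged, the main obstacle is the passage to the just-infinite quotient while retaining $p_c=1$, i.e.\ the covering-map monotonicity of $p_c$; once that is in hand, Wilson's classification together with Theorem~\ref{thm:main1} disposes of the branch and product cases and cleanly isolates the hereditary just-infinite groups to which the hypothesis is applied.
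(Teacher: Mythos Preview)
Your argument is correct and follows the same skeleton as the paper: pass to a just-infinite quotient, invoke Wilson's trichotomy, eliminate the branch and $H^d$ ($d\ge 2$) cases, and isolate the hereditary just-infinite case. Two small remarks: the covering monotonicity you flag as the ``crux'' is precisely \cite[Theorem~1]{BS96}, which the paper simply cites without further ado; and for the branch and direct-product cases the paper disposes of them by observing that such groups contain $\N^d$ as a subgraph of some Cayley graph, rather than by applying Theorem~\ref{thm:main1} to a direct factor as you do---both work, though the embedding is slightly more self-contained (it avoids checking that the rigid stabilizer factors are individually infinite and finitely generated).
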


\begin{proof}
Assume first that $G$ is just-infinite.
Then  either $G$ is:
\begin{itemize}
\item Case (I): either a branch group, 
\item Case (II): or contains a subgroup of finite index that is the direct product $H^d = H \times \cdots \times H$
of $d \geq 1$ copies of a hereditary just-infinite group $H$. 
\end{itemize}
See \cite{branch groups} for background, definitions and the classification mentioned.

In Case (I), for any $d \in \mathbb{N}$, there exists a group $L$, such that $G$ contains a finite index subgroup of the form $L^d$ , see \cite{branch groups}.
Thus, in any branch group, for any $d$, the group admits a Cayley graph that contains a copy of $\N^d$.
Specifically, $p_c(G) < 1$ when $G$ is a branch group (in fact $p_t(G) < 1$).

In Case (II), if $d > 1$ then $G$ again has a Cayley graph that contains a copy $\N^2$, so $p_c(G) < 1$.

Thus, we are only left with the case where $G$ contains a finite index subgroup that is hereditary just infinite.
That is, we have shown that a just-infinite group $G$ with $p_c(G) = 1$ admits a finite index hereditary just-infinite group.

Now, if $G$ is a finitely generated infinite group, then there exists $N \lhd G$ such that 
$G/N$ is just-infinite.  See \eg Claim 2 in the beginning of Section 5 of \cite{B06} for 
a simple method of proving this.
If $p_c(G/N) < 1$ then $p_c(G) < 1$, by \cite[Theorem 1]{BS96}.
So assume that $p_c(G/N) = 1$.  Since $G/N$ is just-infinite, by the above it is hereditary just-infinite.
If Conjecture \ref{conj:main} holds for hereditary just-infinite groups, then $G/N$ has a finite index subgroup
isomorphic to $\Z$.  Thus, $G$ admits a virtual character, and Corollary \ref{cor:car} is applicable.
\end{proof}

{\bf Acknowledgement.} AR is supported by the NCCR SwissMAP, the ERC AG COMPASP, and the Swiss NSF.
AY is supported by the Israel Science Foundation (grant no.\ 1346/15).
This research was initiated while AY was visiting the Section of Mathematics, University of Geneva,
and the Centre Interfacultaire Bernoulli EPFL special semester ``Analytic and Geometric Aspects of Probability on Graphs''.  AY expresses gratitude to both the University of Geneva and the CIB for their wonderful hospitality and support.

\section{Probabilistic tools}

\subsection{EIT}

The proofs of our results are based on the method called \emph{EIT}, or 
{\em exponential intersection tails}. 
Let $\mu$ be a probability measure on the set of infinite paths %self avoiding paths 
on a graph, starting at some fixed origin.
We say that $\mu$ satisfies \emph{EIT}, if it has the following property:
\begin{itemize}
\item[(EIT)] There exists a constant $c>0$ such that for two independent paths $\gamma$ and $\gamma'$ with law $\mu$, 
and any $k\ge 1$,
$$ \mu \otimes \mu \big( \vert \gamma \cap \gamma' \vert \geq k \big)  \leq \exp(-ck). $$
\end{itemize}

This method was introduced in \cite{EIT}.
There it is shown that:
\begin{thm} \label{thm:EIT}
If there exists a measure $\mu$ satisfying \emph{EIT} on a graph $\Gamma$, then $p_c(\Gamma)<1$. Furthermore, $p_t(G)<1$.
\end{thm}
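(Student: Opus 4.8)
The plan is to prove both statements, following the method of \cite{EIT}, through a single second-moment-then-energy computation in which the EIT hypothesis is exactly what controls a variance. Fix a percolation parameter $p$ close to $1$ and work under $\bbP_p$. For a self-avoiding length-$n$ prefix $\eta$ of a $\mu$-path, write $\mu(\eta)$ for the $\mu$-measure of the infinite paths extending $\eta$ and $|\eta|$ for its number of vertices, and set
\begin{equation*}
Z_n = \sum_{\eta} \mu(\eta)\, p^{-|\eta|}\, \1{\eta \text{ open}},
\end{equation*}
the sum ranging over length-$n$ prefixes. This is the open $\mu$-mass surviving to level $n$, corrected by its a-priori survival cost; the point of the normalization is that $\bbE_p[Z_n] = \sum_{\eta} \mu(\eta) = 1$ for every $n$.

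First I would establish $p_c(\Gamma) < 1$. Using that for fixed prefixes $\eta, \eta'$ one has $\bbE_p[\1{\eta \text{ open}}\,\1{\eta' \text{ open}}] = p^{|\eta \cup \eta'|}$, the normalizing factors cancel and the second moment collapses to an intersection count,
\begin{equation*}
\bbE_p[Z_n^2] = \bbE_{\mu \otimes \mu}\big[ p^{-|\gamma[0,n] \cap \gamma'[0,n]|} \big] \leq \bbE_{\mu \otimes \mu}\big[ (1/p)^{|\gamma \cap \gamma'|} \big],
\end{equation*}
where $\gamma, \gamma'$ are two independent $\mu$-paths, $\gamma[0,n]$ is the length-$n$ prefix, and the inequality uses $p<1$ to pass from the truncated to the full intersection. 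Here the EIT hypothesis enters: since $\mu \otimes \mu(|\gamma \cap \gamma'| \geq k) \leq \exp(-ck)$, the exponential moment $\bbE_{\mu \otimes \mu}[(1/p)^{|\gamma \cap \gamma'|}]$ is finite and bounded by a constant $C = C(p,c)$ uniformly in $n$, provided $1/p < e^{c}$, i.e.\ $p > e^{-c}$. The Paley--Zygmund inequality then gives $\bbP_p[Z_n > 0] \geq (\bbE_p Z_n)^2 / \bbE_p[Z_n^2] \geq 1/C > 0$. Because $\Gamma$ has bounded degree and the events $\{Z_n > 0\}$ decrease in $n$ (a surviving length-$(n+1)$ prefix restricts to a surviving length-$n$ prefix), their intersection has probability at least $1/C$, and a König's lemma argument on the locally finite tree of open $\mu$-prefixes produces an infinite open path from the origin on this event. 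Hence $\bbP_p[1 \conn \infty] > 0$ for all $p > e^{-c}$, so $p_c(\Gamma) \leq e^{-c} < 1$.

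For the transience claim I would upgrade $Z_n$ to a flow. Relative to the filtration generated by the vertex states level by level, $Z_n$ is a nonnegative martingale, and the bound above shows it is $L^2$-bounded, hence it converges $\bbP_p$-a.s.\ and in $L^2$ to a limit $Z_\infty$ with $\bbE_p[Z_\infty] = 1$, so $\bbP_p[Z_\infty > 0] > 0$. On $\{Z_\infty > 0\}$ I would define a unit flow $\theta$ from the origin to infinity supported on open edges, by routing along each open $\mu$-path its renormalized mass; concretely $\theta(e)$ is the limit of the level-$n$ open mass crossing $e$, divided by $Z_\infty$. After the same cancellation as above, the energy $\sum_e \theta(e)^2$ is comparable to the expected number of common edges of two independent open $\mu$-paths, and EIT forces $\bbE_{\mu\otimes\mu}[|\gamma \cap \gamma'|] < \infty$ (exponential tails give a finite mean). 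Thus $\theta$ has finite expected energy, so by the finite-energy-flow criterion for transience \cite{LyonsPeres} the cluster of the origin is transient with positive probability; by cluster indistinguishability \cite{indist}, every infinite cluster is then transient $\bbP_p$-a.s., giving $p_t(\Gamma) \leq e^{-c} < 1$.

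The main obstacle is the transience half rather than the clean second-moment bound. Turning the surviving mass $Z_\infty$ into a genuine unit flow demands the correct renormalization by $Z_\infty$, measurability and consistency of $\theta$ across levels, and — crucially — verifying that the relevant quantity is the \emph{quenched} energy of the flow on the open cluster, controlled by the EIT edge-intersection bound, and not merely an annealed energy on $\Gamma$. The remaining points, namely that $Z_n$ is exactly a martingale for site percolation when distinct $\mu$-paths may share vertices, and the compactness step extracting the infinite open path, are routine for bounded-degree (in particular Cayley) graphs but should be recorded with care.
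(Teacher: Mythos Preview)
The paper does not supply its own proof of this statement; it is quoted as a black box from \cite{EIT} (Benjamini, Pemantle, Peres), so there is nothing internal to compare your attempt against. What you have written is essentially a faithful sketch of the argument in \cite{EIT}: the second-moment bound on the normalized open $\mu$-mass $Z_n$ via the EIT hypothesis, Paley--Zygmund plus K\"onig's lemma for $p_c<1$, and the finite-energy flow criterion for $p_t<1$.

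One genuine point of care, which you already flag but slightly underplay: the assertion that $(Z_n)$ is a martingale with respect to a ``level-by-level'' vertex filtration can fail in general for site percolation, because the $(n{+}1)$-st vertex of one $\mu$-path may coincide with an earlier vertex of a different $\mu$-path and hence already be revealed at stage $n$. In the specific $\mu$ constructed later in this paper the $j$-th vertex of every path lies in the coset $Nu_j$, so the cosets stratify the vertices and the martingale property is genuine there; but for an arbitrary EIT measure one should instead argue, as in \cite{EIT}, by passing to a weak limit of the conditional measures on open paths (the $L^2$-bound gives tightness and a nontrivial limit) rather than invoking an exact martingale. With that adjustment your outline is correct.
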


\subsection{Method of evolving sets}
The following is a consequence of Theorem 1.2 of 
Dembo, Huang, Morris, Peres \cite{DHMP16}.
Their theorem is proved using the method of {\em evolving sets} introduced by Morris and Peres in \cite{MP05}.
This is a method to bound the heat kernel decay via the isoperimetric properties of a graph.
See \cite{MP05} and \eg \cite[Chapter 8]{Gabor} for more details.
The theorem is basically stating the following rather intuitive fact:
If instead of walking according to some fixed time-independent transition matrix, 
one chooses some pre-determined time-dependent transition matrices, 
as long as these have some sort of ``uniform isoperimetric dimension'' at least $d$, 
then the heat kernel of this time-dependent walk must decay at most like that in $\Z^d$
(\ie of order at most $t^{-d/2}$).
In order to keep the notation as simple as possible, we do not state the theorem in its full generality, but rather 
tailored to the specific case we require it.

%\url{http://arxiv.org/pdf/1508.05423v2.pdf}

\begin{thm}
\label{thm:evolvingset}
Let $\Gamma_t = (V_t,E_t)$ be a sequence of connected graphs 
on a common vertex set $V_t=V$.  
We assume that the graphs $\Gamma_t$ are all isomorphic.

Denote the degree of $x \in V$ in the graphs $\Gamma_t$ by $\deg_t(x)$.
Suppose that $\sup_{t,x} \deg_t(x) < \infty$ (the degrees are uniformly bounded).
Suppose further that $\deg_t(x) = \deg_{t+1}(x)$ for all $t,x$ (the degrees of a vertex $x$ 
are constant in $t$).

Suppose further that all $\Gamma_t$ admit a $d$-dimensional isoperimetric inequality; 
that is, there exists $d>1$ such that for all $t$ and all non-empty finite sets $A \subset V$
we have
$$ | \p_t A |^d \geq |A|^{d-1} , $$
where 
$$ \p_t A = \{ \{x,y\} \in E_t \ : \ x \in A \ , \ y \not\in A \}  $$
is the edge boundary of $A$ in the graph $\Gamma_t$.

Fix some $\gamma > 0$ and 
consider the time-dependent Markov chain $(X_t)_t$ which has transition probabilities 
\begin{equation} \label{eq:RWpt} 
\Pr [ X_{t+1} = y \ | \ X_t = x ] =  \gamma \cdot \1{x=y} + (1-\gamma) \cdot  \tfrac1{\deg_t(x)} \cdot \1{ \{x,y\} \in E_t } .
\end{equation}

Then, there exist constants $K , C >0$ such that for all $s \geq 0 , t \geq 1$,
$$ \Pr [ X_{t+s} = y \ | \ X_s = x ] \leq C \cdot (K t)^{-d/2} . $$
\end{thm}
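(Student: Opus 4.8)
The strategy is to realize $(X_t)$ as a time-inhomogeneous reversible Markov chain and to apply the evolving-set heat-kernel estimate of \cite{DHMP16} (built on the method of \cite{MP05}). That estimate needs three ingredients: a single measure $\pi$ that is reversible for \emph{every} kernel in the family, a uniform laziness, and a uniform lower bound on the conductance profile of isoperimetric dimension $d$. The crucial structural observation is that, because the degrees $\deg_t(x)$ do not depend on $t$, all the kernels $P_t$ from \eqref{eq:RWpt} share the common reversible measure $\pi(x) := \deg(x)$. Indeed, for $x \neq y$ one computes $\pi(x) P_t(x,y) = (1-\gamma)\,\1{\{x,y\}\in E_t}$, which is symmetric in $x,y$; hence each $P_t$ is reversible with respect to $\pi$, and this $\pi$ is literally the \emph{same} for all $t$ precisely because $\deg_t \equiv \deg$. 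Moreover, with $D := \sup_{t,x}\deg_t(x) < \infty$ and $\deg \geq 1$ (by connectedness), $\pi$ is comparable to counting measure: $|A| \leq \pi(A) \leq D|A|$ for every finite $A$.

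Next I would convert the combinatorial isoperimetric inequality into a conductance-profile bound. For a finite set $A$ the time-$t$ boundary flow is $Q_t(A,A^c) = \sum_{x\in A, y\notin A}\pi(x)P_t(x,y) = (1-\gamma)\,|\p_t A|$, so the hypothesis $|\p_t A| \geq |A|^{(d-1)/d}$ together with $\pi(A)\leq D|A|$ and $|A|\leq \pi(A)$ gives
\begin{equation*}
\frac{Q_t(A,A^c)}{\pi(A)} \;\geq\; \frac{1-\gamma}{D}\,|A|^{-1/d} \;\geq\; \frac{1-\gamma}{D}\,\pi(A)^{-1/d}.
\end{equation*}
Thus the conductance profile obeys $\phi(r) \gtrsim r^{-1/d}$ \emph{uniformly in $t$}. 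Feeding this $d$-dimensional profile into the evolving-set criterion, the number of steps required to push the heat-kernel density down to level $\ep$ is governed by an integral of the form
\begin{equation*}
\int^{1/\ep} \frac{du}{u\,\phi(u)^2} \;\asymp\; \int^{1/\ep} u^{2/d-1}\,du \;\asymp\; (1/\ep)^{2/d},
\end{equation*}
valid for every $d>1$. Inverting this relation yields a density bound of order $t^{-d/2}$, and since $\pi(y)=\deg(y)\leq D$ this becomes $\Pr[X_{t+s}=y \mid X_s=x] \leq C\,(Kt)^{-d/2}$ with $C,K$ depending only on $d$, $D$ and $\gamma$.

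The point requiring genuine care is the \emph{time-inhomogeneity}: one must know that the evolving-set bound depends only on the common profile and is uniform both in the starting time $s$ and across the whole (isomorphic) family $\{\Gamma_t\}$. This is exactly what the time-dependent formulation of \cite{DHMP16} supplies, but that formulation runs the evolving-set process against the adjoint (time-reversed) kernels. Here this causes no trouble: since every $P_t$ is reversible with respect to the \emph{same} $\pi$, each adjoint kernel coincides with the corresponding $P_t$ and satisfies the identical conductance bound, so the reversed chain inherits the same $d$-dimensional profile and the estimate closes with constants independent of $s$. In short, the real work is not a new probabilistic estimate but the verification that the hypotheses of \cite[Theorem 1.2]{DHMP16} hold in this uniform, $s$-independent manner, and the single observation that reconciles everything — making the main obstacle disappear — is that time-constancy of the degrees forces a common reversible measure.
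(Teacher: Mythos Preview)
Your proposal is correct and follows the same route as the paper: the theorem is not proved from scratch but is deduced from \cite[Theorem~1.2]{DHMP16}, and the work consists in checking its hypotheses. The paper records this as a short ``dictionary'' (constant degrees force $\beta(t)\equiv 1$ in their notation, isomorphic $\Gamma_t$ make the isoperimetric constant $\kappa_t$ independent of $t$, hence $\psi_{d,\beta}(t)=\kappa t$ and (1.7) of \cite{DHMP16} gives the bound); your write-up unpacks the same verification in terms of a common reversible measure and a uniform conductance profile, which is exactly the content behind those identifications.
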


For the reader interested in checking the details of this reference, we provide a short 
``dictionary'' to translate Theorem 1.2 of \cite{DHMP16} into the above.
The $\gamma$ mentioned in Theorem \ref{thm:evolvingset} is the same $\gamma$ as in \cite{DHMP16}.
For every $t$, $\pi^{(t)}$ from \cite{DHMP16} is defined via $\pi^{(t)}(x,y) = \1{ \{x,y\} \in E_t }$.
Then, in (1.4) of \cite{DHMP16} we have $\beta(t) = 1$ for all $t$, because the degrees are constant in $t$.
Also, since all the graphs $\Gamma_t$ are isomorphic, we have that $\kappa_t$ from \cite{DHMP16}
is constant in $t$, and positive when $\Gamma_t$ admit a $d$-dimensional isoperimetric inequality.
Thus, for this $d$ we have that $\psi_{d,\beta}$ from \cite{DHMP16} admits
$\psi_{d,\beta}(t) = \kappa t$ for some $\kappa>0$.
(1.7) of \cite{DHMP16} then gives the assertion of Theorem \ref{thm:evolvingset}.

As a consequence of this theorem we have that:
\begin{cor}
\label{cor:evolving}
Under the conditions of Theorem \ref{thm:evolvingset}, let $(X_t)_t , (X'_t)_t$ be two independent copies
of the Markov chain defined in Theorem \ref{thm:evolvingset}. 

If for some $d>2$ the graphs $\Gamma_t$ admit a $d$-dimensional isoperimetric inequality,
then there exists a constant $c>0$ such that for all $k \geq 1$,
$$ \Pr \big[ | \{ t \ : \ X_t = X'_t \} | \geq k \big] \leq e^{-c k} . $$
\end{cor}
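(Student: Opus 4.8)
The plan is to deduce the exponential tail from the polynomial heat-kernel bound of Theorem \ref{thm:evolvingset} by the method of factorial moments. Since the paths relevant to EIT emanate from a common origin $o$, I take $X_0 = X'_0 = o$, so that $t = 0$ is automatically a collision and the random variable $N := |\{t : X_t = X'_t\}|$ satisfies $N \ge 1$; it then suffices to control the collisions occurring at positive times. The goal is to bound every factorial moment of $N-1$ and then repackage these bounds into a single finite exponential moment, from which Markov's inequality delivers the claim.

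The first ingredient is a uniform one-step collision estimate. Because $X$ and $X'$ evolve under the same time-dependent kernel $p_{s,t}(\cdot,\cdot)$, for any gap $a \ge 1$ and any common position $x$,
$$ \Pr[ X_{s+a} = X'_{s+a} \mid X_s = X'_s = x] = \sum_y p_{s,s+a}(x,y)^2 \le \max_y p_{s,s+a}(x,y) \le C (Ka)^{-d/2}, $$
uniformly in $x$ and $s$, by Theorem \ref{thm:evolvingset}. Next I use that the pair $(X_t, X'_t)$ is itself a Markov chain: conditioning sequentially on the successive collision events and applying the one-step estimate at each increment gives, for any $0 < t_1 < \cdots < t_k$ (with $t_0 := 0$),
$$ \Pr[ X_{t_i} = X'_{t_i} \textrm{ for } i=1,\ldots,k] \le \prod_{i=1}^k C \big( K (t_i - t_{i-1}) \big)^{-d/2}. $$
Summing over all such tuples and substituting $a_i = t_i - t_{i-1} \ge 1$, the sum factorizes as a $k$-fold product of one and the same convergent series, so that
$$ \mathbb{E} \binom{N-1}{k} = \sum_{0 < t_1 < \cdots < t_k} \Pr[\textrm{collisions at } t_1,\ldots,t_k] \le \Big( C K^{-d/2} \sum_{a \ge 1} a^{-d/2} \Big)^k =: M^k. $$

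The main obstacle, and the precise reason the hypothesis strengthens to $d > 2$ rather than the $d>1$ of Theorem \ref{thm:evolvingset}, is exactly the convergence of $\sum_{a \ge 1} a^{-d/2}$, which holds if and only if $d/2 > 1$. This finiteness of $M$ is what renders the increments between consecutive collisions effectively summable and independent, and it is the crux that upgrades the a priori merely finite expected number of collisions into a genuine exponential tail.

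Finally I convert the factorial-moment bounds into an exponential moment. Since $N-1$ is a nonnegative integer, the binomial theorem and Tonelli's theorem give, for $0 < z < 1/M$,
$$ \mathbb{E}\big[(1+z)^{N-1}\big] = \sum_{k \ge 0} \mathbb{E}\binom{N-1}{k} z^k \le \sum_{k \ge 0} (Mz)^k = \frac{1}{1 - Mz} < \infty. $$
Writing $\lambda = 1+z > 1$ and applying Markov's inequality yields $\Pr[N \ge k] = \Pr[N-1 \ge k-1] \le \lambda^{-(k-1)} \, \mathbb{E}[\lambda^{N-1}]$, which decays exponentially in $k$; absorbing the leading constant (or slightly decreasing the rate) produces $\Pr[N \ge k] \le e^{-ck}$ with $c = \log \lambda > 0$, as asserted.
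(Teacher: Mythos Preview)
Your argument is correct. The paper takes a slightly different route: it proves the stronger \emph{quenched} statement that for any fixed sequence $(x_t)_t$ one has $\Pr[|\{t : X_t = x_t\}| \geq k] \leq e^{-ck}$, and then applies this with $(x_t)_t$ equal to the (conditioned) second path. To get the quenched bound they do not use factorial moments; instead they pick a spacing $m$ large enough that $\sum_{j \ge 1} C(Kjm)^{-d/2} =: \beta < 1$, so that along each arithmetic progression of step $m$ the conditional expected number of future hits after any given hit is below $1$, forcing a geometric tail; a union bound over the $m$ residue classes then gives the full statement. Both proofs hinge on exactly the same point, namely that $d>2$ makes $\sum_{a \ge 1} a^{-d/2}$ finite. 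Your factorial-moment packaging is a bit more transparent and yields an explicit exponential rate without the auxiliary subsampling; the paper's version has the minor advantage of giving the quenched statement directly, though your argument adapts to that case simply by replacing $\sum_y p_{s,s+a}(x,y)^2$ with $p_{s,s+a}(x,x_{s+a})$ throughout.
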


\begin{proof}
We will in fact prove the following:  For any fixed sequence $(x_t)_t$ we have for all $k \geq 1$,
$$ \Pr \big[ | \{ t \ : \ X_t = x_t \} | \geq k \big] \leq e^{-c k} . $$
This is essentially Lemma 3.1 from \cite{EIT}, and we include a short sketch only for completeness.

We choose $m$ to be large enough so that (by Theorem \ref{thm:evolvingset}) 
for any sequence $(x_t)_t$ and any $t$ we have 
$$ \sum_{t=1}^\infty \Pr [ X_{tm} = x_{tm} \ | \ X_0, \ldots, X_{t} ] \leq \sum_{t=1}^\infty C (K t (m-1))^{-d/2} =: \beta < 1 . $$
Thus, for any $t$,
$$ \Pr [ | \{ j \ : \ X_{t+jm} = x_{t+jm} \} | \geq r ] \leq \beta^r . $$
The conclusion follows readily.
\end{proof}

\begin{rem}
\label{rem:conjugate Cayley graphs}
A specific case where the conditions of Theorem \ref{thm:evolvingset} hold is the following:
Let $G$ be a finitely generated group, and let $N \lhd G$ be a finitely generated normal subgroup.
Suppose that for some (and hence every!) Cayley graph of $N$, we have a $d$-dimensional isoperimetric 
inequality.  Let $S$ be the symmetric generating set of $N$ inducing this Cayley graph.
For $x \in G$ note that $S^x = \{ x^{-1} s x \ : \ s \in S \}$ is again a generating set of $N$ (because $N$ is normal in $G$).
Also, the Cayley graph with respect to $S^x$ is isomorphic to the original Cayley graph with respect to $S$.
So for any fixed sequence $x_1,x_2,\ldots,$ we have that the sequence of Cayley graphs on $N$ 
induced by the generating sets $S^{x_t}$ are all isomorphic.  Such a sequence will adhere to the conditions of
Theorem \ref{thm:evolvingset}.
\end{rem}

\section{Proof of Theorem \ref{thm:main1}}

We separate the proof into three cases and treat each case separately: 
\begin{itemize}
\item 
Case 1: $N$ is not virtually $\bbZ$ nor virtually $\bbZ^2$.
\item
Case 2: $N$ is virtually $\bbZ^2$.
\item
Case 3: $N$ is  virtually $\bbZ$.
\end{itemize}

\subsection{$N$ is not virtually $\bbZ$ nor virtually $\bbZ^2$}
Let $H= G/N = \IP{ Nh_1^{\pm1}, Nh_2^{\pm1}, \dots, Nh_k^{\pm1} }$, where 
$G = \IP{ h_1^{\pm 1} , \ldots, h_k^{\pm 1} }$. Let $N=\IP{ n_1^{\pm1}, n_2^{\pm1}, \dots, n_\ell^{\pm1} }$. 
Let $\Gamma$ be the Cayley graph of $G$ with respect to the generators 
$\{h_i^{\pm1}, n_j^{\pm1}h_i^{\pm1} \ | \ i=1,\ldots, k  \ , \ j=1,\ldots, \ell \}$.

We construct a measure $\mu$ on the set of self-avoiding paths of $\Gamma$ and prove it satisfies EIT.  
First, fix a one-sided infinite self-avoiding path starting from the origin in the Cayley graph of $H$
with respect to the generators $\{ Nh_1^{\pm1}, Nh_2^{\pm1}, \dots, Nh_k^{\pm1} \}$.
Let this path be $(N u_j)_{j \ge 1}$,
where $u_j=s_1s_2 \cdots s_j$, and for each $i$, 
$s_i \in \{h_1^{\pm1}, h_2^{\pm1}, \dots, h_k^{\pm1} \}$. 
We emphasize that the path is self-avoiding in $H = G/N$, meaning $Nu_i = Nu_j$ if and only if $i=j$.
(Such a path can be chosen because $H=G/N$ is an infinite connected graph when viewed as a Cayley graph
with respect to the generators $\{ Nh_1^{\pm1}, Nh_2^{\pm1}, \dots, Nh_k^{\pm1} \}$.)

Define the measure $\mu$ on the paths as follows: Let $(X_i)_{i\geq 1}$ be a sequence of independent random variables each with  uniform distribution on the set $\{1, n_1^{\pm1}, n_2^{\pm1}, \dots, n_\ell^{\pm1} \}$. 
Define $\gamma(j)=X_1 s_1 X_2 s_2\dots X_j s_j$. Note that because of the choice of the generators, $\gamma = (\gamma(1),\gamma(2), \ldots,)$ is indeed a path on $\Gamma$. Hence, the measure on $(X_i)_{i\geq 1}$ induces a measure on the set of self-avoiding paths on $\Gamma$. Call this measure $\mu$. 

Now we prove that this measure $\mu$ satisfies EIT. 
Let $\gamma,\gamma'$ be two independent paths of law $\mu$, and write $\gamma = (\gamma(1), \gamma(2), \ldots), 
\gamma' = (\gamma'(1) , \gamma'(2) , \ldots)$ and 
$\gamma(j) = X_1s_1X_2s_2\dots X_ks_j$, $\gamma' (j) = X'_1s_1X'_2s_2\dots X'_ks_j$.
First notice that if $\gamma(i) = \gamma' (j)$ then $i=j$. 
This is due to the fact that $\gamma(i) \in Nu_i$ and $\gamma'(j) \in Nu_j$, and $(Nu_i)_i$ is a self-avoiding path on $H$. 

Define $u_i(x) = u_i x u_i^{-1}$.
$$Y_i= X_1 u_1(X_2)  u_2(X_3)  \cdots u_{i-1} (X_i)  \quad \textrm{ and } \quad 
Y_i' =  X_1' u_1(X_2')  u_2(X_3')  \cdots u_{i-1} (X_i') . $$

Since $\gamma(i) = Y_i u_i$, and similarly $\gamma'(i) = Y'_i u_i$,
we have that $\gamma(i)=\gamma'(i)$ if and only if $Y_i = Y_i'$.
Specifically,
\begin{equation} \label{eq:meetsametime}
 \{ \vert \gamma \cap \gamma' \vert \geq k \} = \{ \vert \{ i: Y_i=Y'_i  \}\vert \geq k \}.
 \end{equation}

$(Y_i)_i$ can be viewed as a discrete time Markov chain on $N$ with transition probability as in \eqref{eq:RWpt}, where $E_t$ is the edge set of Cayley graph of $N$ with generators $\{ u_{t-1}(n_1^+), u_{t-1}(n_1^-), \dots, u_{t-1}(n_\ell^+), u_{t-1}(n_\ell^-) \}$, and $\gamma = 1 /(2\ell+1)$. 
Eq. \eqref{eq:meetsametime} implies that in order to prove that $\mu$ satisfies EIT, it is enough to show that
$$\mu\otimes\mu \big( \vert \{ i: Y_i=Y'_i  \}\vert \geq k \big) \leq \exp (-ck).$$
%By an easy application of the Markov property, this will follow if we prove that 
%$$ \sum_{i=1}^\infty \sup_{v \in \Gamma} \bbP [ Y_i = v ]  < \infty . $$
The above is a direct consequence of Theorem \ref{thm:evolvingset} (see also Corollary \ref{cor:evolving} and Remark \ref{rem:conjugate Cayley graphs}) once the assumption of isoperimetric inequality is satisfied. 

The following well known theorem guarantees the required isoperimetric inequality and the assertion follows 
(see also \cite[Chapter 5.3]{Gabor}).

\begin{thm}[Gromov, also Coulhon, Sallof-Coste \cite{CSC}]
\label{thm:coulhon-salof}
Let $\Gamma$ be a Cayley graph of a finitely generated group $G$.
Let $B_r = | B(x,r) |$ be the number of elements in the ball of radius $r$ (with respect to the graph metric).
Define $\rho(n) = \min \{ r  \ : \ B_r \geq n \}$.
Then, for any non-empty finite set $A \subset V(\Gamma)$,
$$    | \p A |  \geq \frac{ |A| }{ 2 \rho( 2 |A| ) }  . $$
\end{thm}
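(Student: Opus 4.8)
The plan is to use the translation (or ``displacement'') argument of Coulhon and Saloff-Coste: I will bound $|\p A|$ from below by exhibiting a single group element $g$ of controlled word length whose right-translate $Ag$ is almost disjoint from $A$. Let $S$ be the symmetric generating set defining $\Gamma$, so that the edges of $\Gamma$ are exactly the pairs $\{x,xs\}$ with $s\in S$.

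The first ingredient is a local estimate comparing a set to its translate by a single generator: for any finite $A$ and any $s\in S$,
$$ |A \triangle As| = |A\setminus As| + |As \setminus A| \le 2\,|\p A| , $$
since $a\in A\setminus As$ forces $as^{-1}\notin A$, so that $\{a,as^{-1}\}$ is a boundary edge, and symmetrically for $As\setminus A$; distinct $a$'s are charged to distinct boundary edges. Writing a group element $g$ as a geodesic word $g=s_1\cdots s_\ell$ with $\ell = |g|_S$ and telescoping along the symmetric difference — using that right multiplication by a fixed element is a bijection of $G$ and hence preserves cardinalities of symmetric differences — I then obtain
$$ |A \triangle Ag| \le \sum_{i=1}^{\ell} |A \triangle As_i| \le 2\,\ell\,|\p A| = 2\,|g|_S\,|\p A| . $$

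The second ingredient is an averaging argument over a ball. Set $r=\rho(2|A|)$, so that $B_r = |B(1,r)| \ge 2|A|$ by the definition of $\rho$, and note $|Ag|=|A|$ for every $g$. Averaging the overlap over the ball gives
$$ \frac{1}{B_r}\sum_{g\in B(1,r)} |A\cap Ag| \le \frac{|A|^2}{B_r} \le \frac{|A|}{2} , $$
because $\sum_{g\in B(1,r)} |A\cap Ag| = |\{(a,b)\in A\times A : b^{-1}a\in B(1,r)\}| \le |A|^2$. Hence there is $g^\ast\in B(1,r)$ with $|A\cap Ag^\ast|\le |A|/2$, and therefore $|A\triangle Ag^\ast| = 2|A| - 2|A\cap Ag^\ast| \ge |A|$. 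Combining this with the local estimate applied to $g^\ast$ (whose word length is at most $r$) yields $|A| \le |A\triangle Ag^\ast| \le 2r\,|\p A|$, i.e. $|\p A| \ge \frac{|A|}{2\rho(2|A|)}$, which is the claim.

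The routine parts are the telescoping and the double-counting bound on $|A\triangle As|$. The one step demanding genuine care is the local estimate: one must keep the Cayley-graph convention straight — edges correspond to right multiplication by generators, the cardinality-preserving symmetry used in telescoping is also right multiplication, whereas the graph automorphisms are the left multiplications — and one must ensure each contribution to the symmetric difference is charged to a genuine boundary edge without under- or over-counting. Everything else is elementary counting, and the two factors of $2$ in the statement appear transparently: one from the two halves of the symmetric difference in the local estimate, and one from passing to a ball large enough to contain $2|A|$ elements in the averaging step.
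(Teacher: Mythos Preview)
Your argument is correct and is precisely the standard Coulhon--Saloff-Coste displacement-and-averaging proof. Note, however, that the paper does not actually prove this theorem: it is quoted as a known result with a citation to \cite{CSC} (and a pointer to \cite[Chapter~5.3]{Gabor}), so there is no ``paper's own proof'' to compare against beyond the reference itself. Your write-up is exactly what one finds in those sources; the only point I would tighten is to make the telescoping order explicit---writing $|A\triangle Ag|\le\sum_i |As_{i+1}\cdots s_\ell \triangle As_i\cdots s_\ell| = \sum_i |A\triangle As_i|$---so that the bijection you invoke (right multiplication by the common suffix) is visibly the one doing the work.
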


If the volume growth of a Cayley graph $\Gamma$ is super-quadratic (\ie $B_r \geq c r^{d}$ for some $d>2$)
then $\rho(n) \leq C n^{1/d}$ and $\Gamma$ satisfies a $d$-dimensional isoperimetric inequality.
Gromov's theorem on groups of polynomial growth \cite{Gromov} together with the structure of nilpotent groups
imply that if the Cayley graph of $N$ does not have super-quadratic growth, then $N$ must be virtually nilpotent
with at most quadratic growth, which implies that $N$ must be either virtually $\Z^2$ or virtually $\Z$.

This completes the first case where $N$ is not virtually $\Z$ nor virtually $\Z^2$.

\subsection{$N$ is virtually $\bbZ^2$} 

In this case there exists a Cayley graph $\Gamma$ of $G$ which has a Cayley graph of $N$ 
as a subgraph. Note that $p_c(N)<1$ as $p_c(\mathbb{Z}^2) <1$, and because $N$ is a 
subgraph of $\Gamma$, this implies $p_c(\Gamma) < 1$.

\subsection{$N$ is virtually $\bbZ$}

$N$ admits a finite index infinite cyclic subgroup, which without loss of generality we can assume to be characteristic in $N$,
and thus normal in $G$.
By considering this normal subgroup of $G$, we may assume without loss of generality that $N$ is isomorphic to $\Z$.
Let $N=\IP{ n }$. 

%We first prove the following lemma.
%\begin{lem}\label{lem:ZQuio}
%Let $N$ be a normal subgroup of $G$ and isomorphic to $\mathbb{Z}$, then there exists $G_1$ a finite index subgroup of $G$, such that $G_1$ commutes with $N$.
%\end{lem}
%\begin{proof}
%Let $h: G \rightarrow Z_2$, defined by 
%\begin{equation}
%h(x) = \left\{
%  \begin{array}{ccc}
%  &1 \qquad \qquad  x^{-1}nx  = n\\
%  &-1 \qquad \qquad x^{-1}nx = n^{-1}\\ 
%  \end{array}
%\right.
%\end{equation}
%Let $G_1 = \ker(h)$. $G_1$ has index at most 2, and commutes with all elements of $N$.
%\end{proof}

First, we claim that there exists a finite index subgroup $G_1$ of $G$, such that $G_1$ commutes with $N$.
Indeed, $G_1$ acts on $N \cong \Z$ by conjugation, so for any $x \in G$ we must have $x^{-1} n x \in \{ n, n^{-1} \}$.
Let $G_1$ be the kernel of this map which is of index at most $2$.

Since $G_1$ is finite index in $G$, we may assume without loss of generality that $G=G_1$; that is, 
$N$ is in the center of $G$ ({\em i.e.}\ elements of $N$ commute with all elements of $G$).

We claim that there exists a Cayley graph of $G$ with $\mathbb{Z} \times \mathbb{N}$ as a subgraph. 
Thus, $p_c(G) \leq p_c(\mathbb{Z} \times \mathbb{N}) <1$ would follow. 

Let $H= G/N = \IP{ Nh_1^{\pm1}, Nh_2^{\pm1}, \dots, Nh_k^{\pm1} }$, where 
$G = \IP{ h_1^{\pm 1} , \ldots, h_k^{\pm 1} }$.  
Let $\Gamma$ be the Cayley graph of $G$ with respect to the generators 
$\{h_i^{\pm1}, n, n^{-1} \}$.
Like the first case, let $(N u_j)_{j \ge 1}$ be a self-avoiding path starting from the origin ($u_1 = 1$) in the Cayley graph of $H$
with respect to the generators $\{ Nh_1^{\pm1}, Nh_2^{\pm1}, \dots, Nh_k^{\pm1} \}$.
So, $u_j=s_1s_2 \cdots s_j$, and for each $i$, 
$s_i \in \{h_1^{\pm1}, h_2^{\pm1}, \dots, h_k^{\pm1} \}$. 

We embed the graph $\mathbb{Z} \times \mathbb{N}$ into $\Gamma$, by the mapping $\phi:\mathbb{Z} \times \mathbb{N} \rightarrow \Gamma$ defined as $\phi(i,j) = n^i u_j$. First note that $\phi$ is injective. If $n^{i_1} u_{j_1} = n^{i_2} u_{j_2}$, then projecting modulo $N$ would give us $N u_{j_1} = N u_{j_2}$, and because the path $(N u_j)_{j \ge 1}$ is self-avoiding we get $j_1 = j_2$. It implies that $n^{i_1} = n^{i_2}$, and hence $i_1 = i_2$. The map $\phi$ also maps two neighboring vertices to neighboring vertices. Indeed, 
$(i,j)$ and $(i,j+1)$ are mapped to two neighbors, because
$$\phi(i,j+1) = n^{i} u_{j+1} = n^{i} u_{j} s_{j+1} = \phi(i,j) s_{j+1},$$
and $s_{j+1}$ is in the generating set defining the Cayley graph $\Gamma$. 
Also, $(i+1,j)$ and $(i,j)$ are mapped to neighboring vertices. As $G$ and $N$ commute,
$$ \phi(i+1,j) = n^{i+1} u_{j} = n^{i} n u_{j} = n^{i} u_{j} n =\phi(i,j) n,$$
and $n$ belongs to the generating set of the Cayley graph. This concludes the proof of the embedding of $\Z \times \bbN$
into $\Gamma$.

\section{Groups with a virtual character}

First we mention the following theorem by Rosset (see also \cite{Grig14} for an extension).
\begin{thm}[Rosset \cite{Rosset}] \label{thm:Rosset}
If $G$ has sub-exponential growth and $N \lhd G$ such that $G/N$ is solvable, then $N$ is finitely generated.
\end{thm}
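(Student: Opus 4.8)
The plan is to reduce, by induction on the derived length of the solvable quotient, to the single essential case where $G/N \cong \Z$, and there to invoke a growth dichotomy in the spirit of Milnor. Throughout I take $G$ to be finitely generated (the hypothesis of sub-exponential growth presupposes this), and I repeatedly use that a finitely generated subgroup of a sub-exponential growth group again has sub-exponential growth. For the induction, write $Q = G/N$ and induct on its derived length $\ell$. If $\ell = 0$ then $N = G$ and we are done; if $\ell \ge 1$, let $Q' = [Q,Q]$ and let $M \lhd G$ be the preimage of $Q'$, so that $G/M \cong Q/Q'$ is finitely generated abelian while $M/N \cong Q'$ is solvable of derived length at most $\ell - 1$. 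It then suffices to prove the \emph{abelian-quotient case} (that $M$ is finitely generated), after which $M$ inherits sub-exponential growth and the induction hypothesis applied to $N \lhd M$ finishes the argument. For the abelian-quotient case, choose a series $G = M_t \rhd M_{t-1} \rhd \cdots \rhd M_0 = M$ with $M_{i-1}\lhd M_i$ and $M_i/M_{i-1}$ cyclic, possible since $G/M$ is finitely generated abelian; descending from $M_t = G$, each $M_{i-1}$ is finitely generated once $M_i$ is, by the Nielsen--Schreier fact (when $M_i/M_{i-1}$ is finite) or by the cyclic case below (when $M_i/M_{i-1}\cong \Z$).

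For the case $G/N \cong \Z$, I would write $G = \langle t, b_1, \ldots, b_s\rangle$ with $b_i \in N$ and $t$ mapping to a generator of $\Z$, and set $\alpha(x) = txt^{-1}$. One checks that $N = \langle \alpha^j(b_i) : j \in \Z\rangle$, and introduces the forward and backward chains $V^{\pm}_k = \langle \alpha^{\pm j}(b_i) : 0 \le j \le k\rangle$. The stabilization observation is that $V^+_k = V^+_{k+1}$ forces $V^+_k$ to be forward $\alpha$-invariant, hence to contain $\alpha^j(b_i)$ for all $j \ge 0$; if both chains stabilize then $N = \langle V^+_k, V^-_k\rangle$ is finitely generated. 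Thus if $N$ were \emph{not} finitely generated, at least one chain, say the forward one, would be strictly increasing for every $k$.

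To contradict this I would count group elements of \emph{linear} word length. For $\epsilon \in \{0,1\}^n$ consider $g_\epsilon = b_{i(1)}^{\epsilon_1}\, t\, b_{i(2)}^{\epsilon_2}\, t \cdots t\, b_{i(n)}^{\epsilon_n}$, where at level $p$ the generator $b_{i(p)}$ is chosen to witness $\alpha^{p-1}(b_{i(p)}) \notin V^+_{p-2}$ (available since $V^+_{p-1}\supsetneq V^+_{p-2}$). Each $g_\epsilon$ has length at most $2n$, and telescoping gives $g_\epsilon\, t^{-(n-1)} = \prod_{p=1}^n \alpha^{p-1}(b_{i(p)}^{\epsilon_p}) \in N$, so $g_\epsilon = g_{\epsilon'}$ iff these products agree. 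Examining a hypothetical coincidence at the highest position $p$ where $\epsilon$ and $\epsilon'$ differ, the two prefixes both lie in $V^+_{p-2}$ and one is forced to conclude $\alpha^{p-1}(b_{i(p)}) \in V^+_{p-2}$, contradicting the choice of $b_{i(p)}$; hence all $2^n$ elements are distinct and $|B_{2n}(G)| \ge 2^n$, violating sub-exponential growth. The main obstacle is precisely this last step: the naive representatives obtained by conjugating by $t^j$ have \emph{quadratic} length and yield only a useless sub-exponential lower bound, so the crux is to interleave the factors of $t$ so that representatives stay of linear length while the strict growth of the chain still guarantees exponentially many distinct elements.
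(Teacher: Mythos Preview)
The paper does not give its own proof of this statement: Theorem \ref{thm:Rosset} is quoted as a known result due to Rosset \cite{Rosset} (with a pointer to \cite{Grig14} for an extension) and is used as a black box in the proofs of Corollary \ref{cor:car} and Theorem \ref{thm:p_t}. So there is nothing in the paper to compare your argument against.

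That said, your proposal is a correct and complete proof, and is essentially Rosset's original one. The reduction by induction on derived length to the case $G/N \cong \Z$ is standard, and the core step is exactly Milnor's trick: if the ascending chain $V^+_k$ fails to stabilize, the interleaved words $g_\epsilon = b_{i(1)}^{\epsilon_1} t\, b_{i(2)}^{\epsilon_2} t \cdots t\, b_{i(n)}^{\epsilon_n}$ have length $O(n)$ while the telescoped elements $g_\epsilon t^{-(n-1)} = \prod_{p} \alpha^{p-1}(b_{i(p)}^{\epsilon_p}) \in N$ are pairwise distinct by the ``highest differing index'' cancellation argument you describe, giving $2^n$ elements in a ball of linear radius and hence exponential growth. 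Your emphasis on the obstacle is well placed: conjugating naively by $t^j$ produces quadratic-length representatives and gives nothing; the whole content lies in the interleaving. One tiny cosmetic point: at $p=1$ you implicitly set $V^+_{-1}=\{1\}$ and need some $b_i \neq 1$, which is harmless since otherwise $N=\{1\}$ is already finitely generated.
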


(See also \cite{Grig14} for an extension to the elementary amenable case.)

\begin{proof} [Proof of Corollary \ref{cor:car}]
If $G$ has exponential growth, as mentioned earlier, then $p_c(G)<1$. 
So, we can assume that $G$ has sub-exponential growth. 
By passing to finite index, we may assume without loss of generality that $G$ admits a character.

Let $N$ be a normal subgroup such that $G/N =\mathbb{Z}$.
Then Theorem \ref{thm:Rosset} implies that $N$ is finitely generated. 

If $N$ is infinite then Theorem \ref{thm:main1} is applicable.

If $N$ is finite: then $G$ acts on the finite subgroup $N$ by conjugation.  Let 
$K = \{ x \in G \ : \ \forall \ n \in N \ , \ x^{-1} n x = n \}$.
Then $K$ is normal in $G$ and of finite index (because $G/K$ embeds into permutations on $|N|$ elements).
By replacing $G$ with $K$, we may then assume without loss of generality that any element in $G$ commutes with any
element in $N$; that is, $N$ is central in $G$.
Now, since $G/N = \Z$, there exists $a \in G$ such that $\IP{Na} \cong \Z$.
Let $M = \IP{a}$.  This is an infinite subgroup of $G$ and since $a$ commutes with $N$ it is also normal in $G$.
Since $G/M$ is finite, we get that $G$ is virtually $\Z$ in this case.
\end{proof}

We now prove Theorem \ref{thm:p_t}.

\begin{proof}[Proof of Theorem \ref{thm:p_t}]
As $p_t<1$ is invariant under quasi-isometries, without loss of generality, we can assume $G$ has a character, and there exists $N$ such that $G/N=\mathbb{Z}$. If $G$ has exponential growth, it is known that $p_t<1$
(Lyons \cite{Lyons95} constructs a subgraph of some Cayley graph of $G$, which is a tree of 
exponential growth, taking a random geodesic on this tree results in an EIT measure).
So we can assume that $G$ has subexponential growth.  
Rosset's Theorem (Theorem \ref{thm:Rosset}) 
implies that $N$ is finitely generated. 

As in the proof of Corollary \ref{cor:car}, if $N$ is finite then $G$ is virtually $\mathbb{Z}$.

Suppose $N$ is not virtually $\mathbb{Z}$ or virtually $\mathbb{Z}^2$. The proof of Theorem \ref{thm:main1} constructs a measure which satisfies EIT. Hence, Theorem \ref{thm:EIT} implies that $p_t(G)<1$.

Before continuing, we mention a classical fact: If $G/N= \mathbb{Z}$, then $G \cong N \rtimes \mathbb{Z} $. 
This is true because $\mathbb{Z}$ is a free group. 
Now, if $N$ is virtually Abelian, then by passing to a finite index subgroup of $G$
we may assume without loss of generality that $N$ is Abelian.
Thus, $G \cong N \rtimes \Z$ is solvable of subexponential growth, which then must be 
virtually nilpotent by the classical results of Milnor \cite{Milnor} and Wolf \cite{Wolf}.
The Bass-Guivarch formula (\cite{Bass, Guiv}) implies that $G$ has polynomial growth.
By an unpublished argument of Benjamini \& Schramm, 
see \eg Theorem 9 of \cite{wedges}, if $G$ has polynomial growth then
either $G$ is virtually $\mathbb{Z}$ or virtually $\mathbb{Z}^2$, or $p_t(G)<1$. This concludes the proof.
%
%
%Suppose $N$ is virtually $\mathbb{Z}^2$. 
%By passing to a finite index subgroup, 
%without loss of generality we can assume $N$ is isomorphic to $\mathbb{Z}^2$. 
%Then $G \cong \mathbb{Z}^2 \rtimes \mathbb{Z}$. This is a solvable group, so by the classical results of Milnor \cite{Milnor} and Wolf
%\cite{Wolf}, $G$ either has exponential growth or is virtually nilpotent, and as we assumed that $G$ has sub-exponential growth, we conclude that $G$ is virtually nilpotent. If $G/ \mathbb{Z}^2 = \mathbb{Z}$ then one can readily deduce that $G$ has at most cubic growth
%(see \eg \cite{Kleiner}).
%It is a direct consequence of Bass-Guivarch formula (\cite{Bass, Guiv}) 
%that any virtually nilpotent group with cubic growth is virtually $\mathbb{Z}^3$, and it is known that $p_t(\mathbb{Z}^3) <1$.
%
%To finish the proof we need to consider the case $N$ is virtually $\mathbb{Z}$. Again, without loss of generality, we can assume $N$ is isomorphic to $\mathbb{Z}$. Hence, $G= \mathbb{Z} \rtimes \mathbb{Z}$. Similar to the proof of Theorem \ref{thm:main1}, there is subgroup of index at most two of $G$ which is $ \mathbb{Z} \times \mathbb{Z}$, and hence $G$ is virtually $\mathbb{Z}^2$.
%
%So, either $G$ is virtually $\mathbb{Z}$ or virtually $\mathbb{Z}^2$, or $p_t(G)<1$. This concludes the proof.
\end{proof}

%
%%
%%%
%%%%
%%%%%

\bigskip 

\noindent
\begin{minipage}{0.46\textwidth}
	\footnotesize\obeylines
	\textsc{University of Geneva}
	\textsc{E-mail:} \texttt{aran.raoufi@unige.ch}\smallskip
\end{minipage}\hfill
\begin{minipage}{0.46\textwidth}
	\footnotesize\obeylines
	\begin{flushright}
		\textsc{Ben-Gurion University of the Negev}
		\textsc{E-mail:} \texttt{yadina@bgu.ac.il}
	\end{flushright}
\end{minipage}

\end{document}